\newtheorem{theorem}{Theorem}[section]
\newtheorem{corollary}{Corollary}[theorem]
\newtheorem{lemma}[theorem]{Lemma}
\newtheorem*{remark}{Remark}
\newtheorem{definition}{Definition}
\newcommand{\bit}{\begin{itemize}}
\newcommand{\eit}{\end{itemize}}
\newcommand{\bd}{\begin{description}}
\newcommand{\ed}{\end{description}}
\newcommand{\ben}{\begin{enumerate}}
\newcommand{\een}{\end{enumerate}}
\newcommand{\bqn}{\begin{equation}\begin{aligned}}
\newcommand{\eqn}{\end{aligned}\end{equation}}
\newcommand{\bgn}{\begin{gather}}
\newcommand{\egn}{\end{gather}}
\newcommand{\bqnn}{\begin{aligned}}
\newcommand{\eqnn}{\end{aligned}\end{equation}}
\newcommand{\bt}{\begin{thm}}
\newcommand{\et}{\end{thm}}
\newcommand{\bl}{\begin{lem}}
\newcommand{\el}{\end{lem}}
\newcommand{\bp}{\begin{prop}}
\newcommand{\ep}{\end{prop}}
\newcommand{\bc}{\begin{cor}}
\newcommand{\ec}{\end{cor}}
\newcommand{\bdefn}{\begin{defn}}
\newcommand{\edefn}{\end{defn}}
\newcommand{\brem}{\begin{rem}}
\newcommand{\erem}{\end{rem}}
\newcommand{\bproof}{\begin{proof}}
\newcommand{\eproof}{\end{proof}}
\newcommand{\bex}{\begin{ex}}
\newcommand{\eex}{\end{ex}}
\newcommand{\R}{\mathbb{R}}
\newcommand{\bcs}{\begin{cases}}
\newcommand{\ecs}{\end{cases}}
\newcommand{\spc}{\;\;\;}
\newcommand{\x}{x}
\newcommand{\s}{s}
\newcommand{\expp}{\text{e}}
\newcommand{\bnu}{\nu}
\newcommand{\gyp}{\gamma(t^{1/2}y^{\prime})}
\newcommand{\yp}{y^{\prime}}
\newcommand{\typ}{t^{1/2}y^{\prime}}
\newcommand{\xp}{x^{\prime}}
\newcommand{\op}{0^{\prime}}
\newcommand{\INT}{\int\limits}
\newcommand{\SUM}{\sum\limits}
\newcommand{\rr}{r}
\newcommand{\B}{B}
\newcommand{\abs}[1]{\lvert #1 \rvert}
\newcommand{\abss}[1]{\big\lvert #1 \big\rvert}
\begin{document}
\date{}
\author{E. Baspinar\thanks{University of Bologna, Department of Mathematics.} \and G. Citti\thanks{University of Bologna, Department of Mathematics.}\footnotemark[1]}
\title{A diffusion driven curvature flow}
\maketitle

\section{Introduction}

Geometrical flow is the evolution of a surface $\Sigma_0\subset\R^n$ 
whose points move with speed equal to a function of the mean curvature and direction 
normal to the evolving surface $\Sigma_t\subset\R^n$. In particular mean curvature flow has been studied by several authors such as Altschuler-Grayson \cite{altschuler1991shortening}, \cite{ecker1989mean}, Gage-Hamilton \cite{gage1986heat}, Grayson \cite{grayson1987heat} and Huisken \cite{huisken1984flow} who exploited parametric methods of differential geometry. For an overview on other geometrical flows we refer to \cite{gerhard1999geometric, giga2006surface,lu2002surface}.

However the parametric methods were insufficient if singularities occur during mean curvature flow. In order to handle such difficulties Brakke \cite{brakke2015motion} employed varifold approach via geometric measure theory and provided weak solutions to the mean curvature flow. Furthermore, Osher-Sethian \cite{osher1988fronts} used level set approach in numerical studies and later Evans-Spruck \cite{evans1991motion} and Chen-Giga-Goto \cite{chen1991uniqueness} extended level set approach in combination with the theory of viscosity solutions in order to provide generalized weak solutions to mean curvature evolution PDE. 

Here we will be interested in motion by curvature of surfaces which are expressed 
as a graph $x=(\x^{\prime},\gamma(\x^{\prime}))\in\R^n$ of a smooth function $\gamma:\R^{n-1}\rightarrow \R$. In this case the curvature 
is expressed as 
\begin{align}
K=\operatorname{div}\Bigg(\frac{\nabla\gamma}{\sqrt{1+\lvert\nabla \gamma\rvert^2}}\Bigg).
\end{align}   
A special class of curvature equations, which contains 
the motion by curvature is 
\begin{align}\label{eq:mcfq}
\partial_t\gamma = F(|\nabla\gamma|^2+1) K(\gamma), 
\end{align}
for a suitable smooth function $F$. In the special case of mean curvature flow $F(1 + |\nabla \gamma|^2)= (1 + |\nabla \gamma|^2)^{1/2}$.
\bigskip

Bence-Merriman-Osher \cite{merriman1992diffusion} provided a numerical algorithm which expresses the motion by curvature as a two step procedure with diffusion and recovery of the surface. They started with a surface $\Sigma_0$ which was the boundary of a set $C_0$ and applied heat diffusion to the 
characteristic function $\chi_{C_0}$ of the set $C_0$ for a fixed interval of time $[0, T]$:
\begin{align}
\bcs
u_t=\Delta u\spc\text{in $\R^n\times (0,\infty)$}\\
u=\chi_{C_0}\spc\text{on $\R^n\times \{t=0\}$}.
\ecs
\end{align}
At time $T>0$ they recovered a new set  as
\begin{align}\label{acca}
C_1=H(T)C_0\equiv\{\x\in\R^n :\; u(\x,T)\geq \frac{1}{2}\}.
\end{align} 
Applying iteratively this procedure, Bence-Merriman-Osher generated 
a sequence of sets $(C_j)$ and conjectured 
in \cite{merriman1992diffusion} that their 
boundaries $\Sigma_j$ evolved by mean curvature flow. The convergence of the algorithm to mean curvature flow was proven by Evans \cite{evans1993convergence}, Barles-Georglin \cite{barles1995simple}, Ishii \cite{ishii1995generalization}, Ishii-Pires-Souganidis \cite{ishii1999threshold}, Vivier \cite{vivier2000convergence}, Leoni \cite{leoni2001convergence} and Goto-Ishii-Ogawa \cite{goto2002method}. 

In his proof, Evans applied a level set approach. The sets $C_j$ were identified with sub-level sets of functions $f_j$, and expressed as $C_j=\{x\in\R^n:\;f_j(x)\geq \lambda\geq 0 \}$. Accordingly the operator $H$ in \eqref{acca} is rewritten as an operator acting on continuous functions. Then with general instruments of nonlinear semigroup theory, he proved that for every $f_0$ in the domain of the curvature operator and for 
every $t\ge 0$, the iterative application of the algorithm, converges to the 
solution of mean curvature flow. Formally, there exists the limit
\begin{align}\label{thesis}
\lim_{j\to \infty} H\bigg(\frac{t}{j}\bigg)^j f_0 , 
\end{align}
and it coincides with the motion by curvature of the level sets of the function $f_0$. 
The proof was extended to motion of graphs by curvature in Carnot groups in \cite{capogna2013sub}. 

A generalization of Bence-Merriman-Osher algorithm 
was proposed in \cite{citti2006cortical}. This version of the algorithm was 
inspired by the behavior of simple cells of the visual cortex, and 
their ability of completion of corrupted parts of images. In the original version, this alogrithm allows a surface and a density function defined on it to evolve at the same time. 
Calling $u$ the evolution in time of a measure concentrated on the surface,
they defined a  new surface as the zero 
level-set of the gradient of $u$ along normal direction. 
The authors proposed that 
$\Sigma_j(t/j)$ converges to a solution of the mean curvature flow, and 
provided a local approximation result. However a complete proof is still missing. 

\bigskip

Hence we modify the algorithm of \cite{citti2006cortical}, introducing a geometrical correction 
in the initial datum, and we provide a complete proof of its convergence. 
Motivated by the applied problem we consider evolution of surfaces which are graphs given by 
$$\Sigma_t = (x^{\prime},\gamma(x^{\prime}))\subset \R^n.$$
We first apply heat diffusion in an interval $[0, T]$ of time 
\begin{align}\label{eq:mainHeatDiffReferred}
\bcs
\partial_t u=\Delta u\spc\text{in $\R^n\times (0,\infty)$}\\
u=<\nu,r>\delta_{\Sigma_0}\spc\text{on $\Sigma_0 \times \{t=0\}$},
\ecs
\end{align}
and then define the new surface at $t=T$ as the zero level set of the gradient along some direction via
\begin{align}\label{eq:surfaceDefn2}
\Sigma_1:= \{x\in\R^n:\; \nabla_r u(x,t)=0 \},
\end{align}
where $r$ is a fixed unit vector. If $r= e_n$, the last element of the canonical basis, we 
can prove that $\Sigma_1$ is the graph of a new function $\gamma=H(t)f_0$. Then the same procedure can be iterated. 

We provide here a complete proof of the convergence of this algorithm to the solution of the curvature flow:
\begin{align}\label{eq:firstEquationAppear}
\bcs
\partial_t\gamma-\frac{<\rr, e_n>}{ <\nu_{s_0}, e_n><\nu_{s_0}, \rr>}\SUM_{i,j=1}^{n-1}\bigg(\delta_{ij}-\frac{\gamma_{x_i} \gamma_{x_j} }{1+|\nabla \gamma|^2}\bigg)\gamma_{x_ix_j}=0\quad \text{in $\Sigma_t^{\prime}\times (0,T]$}\\
\gamma=f_0\quad \text{on $\Sigma_0^{\prime}\times\{t=0\}$,}
\ecs
\end{align}
where prime denotes the first $(n-1)$ components of the corresponding terms here and everywhere it appears from now on.

Precisely, in the special case where $r=e_n$ our main result can be stated as follows:

\begin{theorem}\label{maintheorem}
If $f_0$ is a continuous and periodic function, and $\gamma$ is the unique solution to \eqref{eq:mcfq}, with $F(1+|\nabla \gamma|^2)=(1+|\nabla \gamma|^2)^{3/2} $ and initial datum $f_0$, then
\begin{align}
\gamma(x^{\prime},t)=\lim_{j\rightarrow \infty}H\Big(\frac{t}{j}\Big)f_0,\quad\text{uniformly for $x^{\prime}\in \Sigma_t^{\prime}$ and $t\geq 0$ in compact sets.}
\end{align}
\end{theorem}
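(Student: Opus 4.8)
The plan is to split the proof into two largely independent parts, in the spirit of Evans' treatment \cite{evans1993convergence} of the Bence--Merriman--Osher scheme adapted to the density-driven algorithm of \cite{citti2006cortical}: (i) a \emph{one-step consistency estimate}, describing precisely how much one application of the operator $H$ in \eqref{eq:surfaceDefn2} displaces a smooth graph, and (ii) an \emph{abstract convergence step} which upgrades the one-step estimate to convergence of the iteration $H(t/j)^j f_0$ of \eqref{thesis}. The weight $\langle\nu,r\rangle$ inserted in the initial datum \eqref{eq:mainHeatDiffReferred} is the geometric correction that makes the first-order term in (i) come out to be exactly the operator appearing in \eqref{eq:firstEquationAppear}.

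For part (i), fix a smooth periodic $\phi$, let $\Sigma_0$ be its graph and write the solution of \eqref{eq:mainHeatDiffReferred} as the convolution of the heat kernel with the weighted surface measure,
\begin{align}
u(x,t)=\int_{\Sigma_0}\frac{1}{(4\pi t)^{n/2}}\,e^{-\lvert x-y\rvert^2/4t}\,\langle\nu(y),r\rangle\,d\mathcal H^{n-1}(y).
\end{align}
Parametrizing $\Sigma_0$ by its first $n-1$ coordinates and rescaling the integration variable around the point of $\Sigma_0$ closest to $x$, Laplace's method yields an asymptotic expansion of $u$ and, after differentiation, of $\nabla_r u$, valid uniformly for $x$ in a tubular neighbourhood of $\Sigma_0$ as $t\to0^+$. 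Solving $\nabla_{e_n}u(x,t)=0$ for $x_n$ by the implicit function theorem then exhibits $\Sigma_1$ as the graph of a function $H(t)\phi$ with
\begin{align}
H(t)\phi(x')=\phi(x')+t\,\frac{\langle r,e_n\rangle}{\langle\nu_{s_0},e_n\rangle\langle\nu_{s_0},r\rangle}\sum_{i,j=1}^{n-1}\Big(\delta_{ij}-\frac{\phi_{x_i}\phi_{x_j}}{1+\lvert\nabla\phi\rvert^2}\Big)\phi_{x_ix_j}+o(t),
\end{align}
and for $r=e_n$ the prefactor equals $1+\lvert\nabla\phi\rvert^2$, so the first-order term is $(1+\lvert\nabla\phi\rvert^2)^{3/2}K(\phi)$, matching \eqref{eq:mcfq}--\eqref{eq:firstEquationAppear}. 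One then extends this estimate from smooth $\phi$ to continuous periodic $f_0$; here it is convenient to observe first that $H(t)$ commutes with the addition of constants, $H(t)(\phi+c)=H(t)\phi+c$, and is therefore, together with monotonicity (see below), a non-expansion in the sup norm, which makes the extension routine.

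For part (ii), I would check that $H(t)$ is monotone, $f\le g\Rightarrow H(t)f\le H(t)g$ — which follows from the maximum principle for the heat equation combined with the geometric definition \eqref{eq:surfaceDefn2} of the new surface — and that the iterates $H(t/j)^j f_0$ are equibounded and equicontinuous on compact $(x',t)$-sets, using periodicity and barriers built from part (i). Monotonicity, stability and consistency then place us within the standard convergence theory for monotone approximation schemes (as used in \cite{evans1993convergence,barles1995simple}): the half-relaxed limits $\overline u=\limsup^{\ast}H(t/j)^j f_0$ and $\underline u=\liminf_{\ast}H(t/j)^j f_0$ are, respectively, a viscosity sub- and supersolution of \eqref{eq:firstEquationAppear}, and since that equation is the quasilinear graph form of a geometric flow — degenerate elliptic in $D^2\gamma$ with the nonnegative symbol $\delta_{ij}-\gamma_{x_i}\gamma_{x_j}/(1+\lvert\nabla\gamma\rvert^2)$ and positive factor $F$ — it admits a comparison principle (which also underlies the uniqueness assumed in the statement; cf. \cite{chen1991uniqueness,evans1991motion}). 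Comparison gives $\overline u\le\underline u$, hence equality, and the common value must be the unique solution $\gamma$ of \eqref{eq:mcfq}; this is precisely the convergence asserted in Theorem \ref{maintheorem}, uniformly on compact sets.

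The main obstacle I expect is part (i): controlling the Laplace-type expansion of the heat evolution of the singular datum $\langle\nu,r\rangle\delta_{\Sigma_0}$ uniformly up to the surface, rigorously estimating the remainder after differentiating in the direction $r$, and carrying out the implicit-function step that identifies $\Sigma_1$ as a graph and reads off the curvature operator — all uniformly in the parameters that later enter the iteration. A secondary, but not automatic, point is the monotonicity of $H$: because $H$ is defined through the nodal set of a directional derivative rather than through a manifestly order-preserving formula, its verification requires a genuine geometric argument rather than a one-line invocation of the parabolic maximum principle.
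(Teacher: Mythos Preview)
Your proposal is correct and shares with the paper the same two ingredients: the one-step consistency estimate (your part (i) is exactly the content of the paper's Theorem \ref{thm:thm1} and Corollary \ref{cor:corollary1}, obtained by the same Laplace-type expansion and change of variables) and the structural properties of $H$ (commutation with constants, monotonicity, sup-norm contraction --- the paper's Theorem \ref{Hlip}). Your remark that monotonicity is ``not automatic'' is well taken; the paper proves it by a direct computation on the sign of $\partial_{x_n} u$, not by a one-line maximum principle.

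Where you diverge is in the abstract convergence step. You run the Barles--Souganidis scheme: half-relaxed limits of the iterates $H(t/j)^j f_0$, consistency to identify them as viscosity sub/supersolutions of \eqref{eq:firstEquationAppear}, and the comparison principle to close. The paper instead follows Evans' nonlinear-semigroup route via the Brezis--Pazy theorem (Theorem \ref{BrezisPazyteo}): it verifies the resolvent condition \eqref{hyp} by setting $\gamma_t=(I+t^{-1}(I-H(t)))^{-1}f$, extracting a uniform limit by Arzel\`a--Ascoli, and showing that this limit solves $(I+A)\gamma=f$ in the viscosity sense. Both approaches rest on the same consistency estimate and on comparison/uniqueness for the limiting PDE, so neither is more elementary; the Barles--Souganidis argument is arguably more direct (it works on the iterates themselves rather than on an auxiliary resolvent family), while the Brezis--Pazy route ties the result explicitly to the semigroup generated by $A$ and makes the identification with the Crandall--Liggett solution in Theorem \ref{thm:FunGenThm} immediate.
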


The peculiarity of the algorithm proposed in \cite{citti2006cortical} relies on the fact that it models the behavior of the cells of the visual cortex, and it can be used to implement 
a surface completion algorithm. Both those properties are preserved in our algorithm (as explained in \eqref{eq:mainHeatDiffReferred} and \eqref{eq:surfaceDefn2}) through the geometric modification on the initial function. In addition, a minimization in a fixed direction is simpler to implement than the minimization in the normal direction. Furthermore the special case with $r=e_n$ is particularly important since it complies with the direction in which the graph function is defined, thus with orientation selectivity of simple cells as described by the visual cortex model. The proof of convergence of our algorithm is partially inspired by the proof of Evans \cite{evans1993convergence}, 
but we deeply simplify it, since we study in a graph setting. 

In section 2 we recall the main instruments of nonlinear semigroup theory necessary for the proof of Theorem \ref{maintheorem}, and the weak definition of viscosity solution of \eqref{eq:mcfq}. In particular we will see a general definition of the curvature operator 
$$A(\gamma) = F(1 + |\nabla \gamma|^2) K$$.
 
In section \ref{sec:propertiesOfTheOperatorH} we show that the operator $H$ is contractive.
 
In section 4 we study the local behavior of our algorithm, 
showing that each point of the surface moves 
in normal direction with a speed equal to  $A(\gamma)/(1 + |\nabla \gamma|^2)$.

Proofs of sections 3 and 4 are the novel parts of the paper, and they are more delicate than the previous ones of Evans \cite{evans1993convergence}, 
since the initial datum is given only on a surface. 
Indeed Citti-Sarti surface completion model \cite{citti2006cortical} motivates the choice of the geometrically modified density function $<\nu,r>\delta_{\Sigma_0}$ on the surface $\Sigma_0$ as the initial function. 

Finally in section 5 we conclude the proof of Theorem \ref{maintheorem}, 
applying a general theorem of Brezis and Pazy \cite{brezis1972convergence}, which is given as follows:  
\begin{theorem}\label{BrezisPazyteo}
Suppose that there exists a family of contractive operators 
$\{H(t)\}_{t\ge 0}$ satisfying
\begin{align}\label{hyp}
(I+\lambda A)^{-1} f= \lim_{t\to 0^+} \bigg( I+\lambda t^{-1} (I-H(t)) \bigg)^{-1} f,
\end{align}
for every $f,g\in B$ and $\lambda>0$. Then for every $f\in \bar D(A)$ and $t\ge 0$,
one has: 
\begin{align}\label{thesis}
M(t)f=\lim_{j\to \infty} H\bigg(\frac{t}{j}\bigg)^j f , \text{ uniformly for } t \text{ in compact sets,}\end{align}
where $M(t)f$ is the semigroup generated by $A$ (see the definition in Theorem \ref{thm:FunGenThm} below).
\end{theorem}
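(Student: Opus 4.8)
The plan is to read Theorem~\ref{BrezisPazyteo} as a nonlinear analogue of the Chernoff product formula and prove it in three stages: first identify the pointwise limit in \eqref{hyp} of the approximate resolvents as the resolvent family of an m-accretive operator $A$; then use the nonlinear generation theorem (Theorem~\ref{thm:FunGenThm}) to produce the semigroup $M(t)$; and finally bound $\|H(t/j)^jf-M(t)f\|$ through a Chernoff-type product inequality combined with the resolvent convergence. For the first stage I would fix $t>0$ and set $A_t:=t^{-1}(I-H(t))$. Since $H(t)$ is a contraction, $I-H(t)$ is accretive, so $A_t$ is accretive, everywhere defined and Lipschitz, hence m-accretive, and $(I+\lambda A_t)^{-1}=(I+\lambda t^{-1}(I-H(t)))^{-1}$ is a well-defined contraction on $B$ for every $\lambda>0$. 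By \eqref{hyp} the limits $J_\lambda f:=\lim_{t\to0^+}(I+\lambda A_t)^{-1}f$ exist for all $f\in B$ and $\lambda>0$, so each $J_\lambda$ is a contraction; the resolvent identity, valid for each $(I+\lambda A_t)^{-1}$, passes to the limit, and the standard characterisation of resolvent families (recalled in Section~2) produces a unique m-accretive $A$ with $(I+\lambda A)^{-1}=J_\lambda$ for all $\lambda>0$.

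For the second stage I would invoke Theorem~\ref{thm:FunGenThm} to get the exponential formula $M(t)f=\lim_{n\to\infty}(I+\tfrac tnA)^{-n}f$ on $\bar D(A)$, uniformly for $t$ in compact sets, and likewise the contraction semigroup $e^{-rA_s}$ generated by the (Lipschitz, m-accretive) operator $A_s$ for each $s>0$. The resolvent convergence from the first stage together with the nonlinear Trotter--Kato theorem (see \cite{brezis1972convergence}) then yields $e^{-rA_s}f\to M(r)f$ as $s\to0^+$, uniformly for $r$ in compact sets and $f\in\bar D(A)$.

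For the third stage I would first take $f\in D(A)$, fix $T>0$, and for $j\ge1$, $t\in[0,T]$ put $s=t/j$ and regularise $f$ by $f_s:=(I+\lambda A_s)^{-1}(f+\lambda Af)$ with $\lambda>0$ fixed. Since $f\in D(A)$ we have $(I+\lambda A)^{-1}(f+\lambda Af)=f$, so $f_s\to f$ as $s\to0^+$, uniformly in $t\in[0,T]$ because $s\le T/j$; and the resolvent equation $f_s+\lambda A_sf_s=f+\lambda Af$ gives $\|A_sf_s\|\le\|Af\|+\lambda^{-1}\|f-f_s\|\le\|Af\|+1$ for $j$ large. Then
\begin{align*}
\|H(t/j)^jf-M(t)f\| &\le \|H(s)^jf-H(s)^jf_s\| + \|H(s)^jf_s-e^{-tA_s}f_s\|\\
&\quad + \|e^{-tA_s}f_s-e^{-tA_s}f\| + \|e^{-tA_s}f-M(t)f\|.
\end{align*}
The first and third terms are $\le\|f-f_s\|$ by contractivity of $H(s)^j$ and $e^{-tA_s}$; the fourth tends to $0$ uniformly in $t\in[0,T]$ by the second stage; and for the middle term I would apply the Chernoff-type inequality $\|H(s)^jx-e^{-jsA_s}x\|\le\sqrt j\,\|(I-H(s))x\|$ at $x=f_s$, which gives the bound $\sqrt j\,s\,\|A_sf_s\|=\tfrac{t}{\sqrt j}\,\|A_sf_s\|\le\tfrac{T}{\sqrt j}(\|Af\|+1)\to0$. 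Hence $\sup_{t\in[0,T]}\|H(t/j)^jf-M(t)f\|\to0$; for general $f\in\bar D(A)$ I would choose $\hat f\in D(A)$ with $\|f-\hat f\|<\varepsilon$ and use contractivity of $H(t/j)^j$ and $M(t)$ to get $\limsup_j\sup_{t\in[0,T]}\|H(t/j)^jf-M(t)f\|\le2\varepsilon$, whence \eqref{thesis}.

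The hard part will be the middle term of the last display: the discrete generator $A_s=s^{-1}(I-H(s))$ carries a $1/s$ factor that would destroy the Chernoff error unless $\|A_sf_s\|$ stays bounded, and it is precisely the accretive/resolvent structure together with the contractivity of $H(t)$ (which plays the role of a stability condition) that keeps it under control; the regularisation $f_s$ is the device that converts the resolvent convergence of \eqref{hyp}---which by itself does \emph{not} give $A_sf\to Af$ on $D(A)$---into the bound actually needed. The only other substantial input is the nonlinear Trotter--Kato theorem used in the second stage, which can be quoted directly from \cite{brezis1972convergence}.
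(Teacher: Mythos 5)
Your proposal is correct, but note that the paper does not prove this statement at all: Theorem \ref{BrezisPazyteo} is quoted verbatim as ``a general theorem of Brezis and Pazy \cite{brezis1972convergence}'' and is used as a black box in Section \ref{sec:mainResult}. What you have written is essentially a reconstruction of the original Brezis--Pazy argument: the identification of $A_t=t^{-1}(I-H(t))$ as an everywhere-defined, Lipschitz, m-accretive operator; the regularisation $f_s=(I+\lambda A_s)^{-1}(f+\lambda Af)$ to convert resolvent convergence into the uniform bound on $\|A_sf_s\|$; and the three-term splitting through the auxiliary semigroups $e^{-tA_s}$. Your accounting of where the hypotheses enter (contractivity as the stability condition, \eqref{hyp} as the consistency condition) is accurate, and the passage from $D(A)$ to $\bar D(A)$ by contractivity is handled correctly. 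Two remarks on the inputs you quote: the nonlinear Trotter--Kato theorem is indeed proved in \cite{brezis1972convergence} and may legitimately be cited, but the Chernoff-type inequality $\|H(s)^jx-e^{-jsA_s}x\|\le\sqrt j\,\|(I-H(s))x\|$ is \emph{not} the elementary linear Chernoff estimate (the power-series proof is unavailable for nonlinear $H(s)$); for nonlinear contractions it requires the differential-inequality argument $\phi_k'\le-\phi_k+\phi_{k-1}$ for $\phi_k(r)=\|e^{r(H(s)-I)}x-H(s)^kx\|$ followed by a Cauchy--Schwarz estimate on the resulting Poisson-weighted sum. If this proof were to be included in the paper rather than cited, that lemma would need to be stated and proved explicitly, since it carries most of the weight of the third stage. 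Also, your first stage is partly redundant: hypothesis \eqref{hyp} already identifies the limit of the approximate resolvents with the resolvent of the given operator $A$, so there is no need to reconstruct $A$ from the limiting family $J_\lambda$.
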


\section{Nonlinear semigroups and curvature flow of graphs}

\begin{definition}\label{diss} Let $\B$ be a Banach space, and A a $\B-$valued nonlinear operator with domain $D(A)\subset \B$. We say that $-A$ is  $m-$dissipative if
\begin{itemize}\item{$R(I+\lambda A)=\B$ for every $\lambda>0$,}
\item{its resolvent $J_\lambda=(I+\lambda A)^{-1}$ is
a single-valued contraction.}
\end{itemize}
\end{definition}

\bigskip

For non dissipative operators $A$ it is possible to apply 
the fundamental generation theorem of Crandall and Liggett \cite[Theorem I, p.266]{crandall1971generation} which gives a weak definition of solution to evolution equations in the setting of a nonlinear semigroup:
\begin{theorem}\label{thm:FunGenThm}
If $A$ is non dissipative operator on a Banach space $B$, then for all $f\in B$ the limit
\begin{align}
M(t)(f):=\lim_{j\rightarrow \infty, \lambda j\rightarrow t}(I+\lambda A)^{-j}f,
\end{align}
exists locally uniformly in $t$. This limit is called nonlinear semigroup solution 
generated by $A$. 
\end{theorem}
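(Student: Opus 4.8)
The plan is to reproduce the classical generation argument of Crandall and Liggett. We work under the hypothesis of Definition \ref{diss}, i.e.\ $-A$ is $m$-dissipative, so that for every $\lambda>0$ the resolvent $J_\lambda := (I+\lambda A)^{-1}$ is a single-valued contraction defined on all of $B$; hence each iterate $J_\lambda^n$ is well defined and is again a contraction. The algebraic starting point is the resolvent identity
\[
J_\lambda x = J_\mu\!\left( \tfrac{\mu}{\lambda}\, x + \big(1 - \tfrac{\mu}{\lambda}\big) J_\lambda x \right),
\]
valid for all $\lambda,\mu>0$ and checked directly from the definition, together with two elementary bounds for $x\in D(A)$: from $x - J_\lambda x = \lambda\, A J_\lambda x$ and $\|AJ_\lambda x\|\le\|Ax\|$ one gets $\|x-J_\lambda x\|\le\lambda\|Ax\|$, and then by telescoping and contractivity $\|J_\lambda^n x - x\|\le n\|J_\lambda x - x\|\le n\lambda\|Ax\|$.

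The heart of the proof is a quantitative Cauchy estimate for the iterates. I would fix $x\in D(A)$ and set $a_{n,m} := \|J_\lambda^n x - J_\mu^m x\|$. Substituting the resolvent identity into the first factor and using that $J_\mu$ is a contraction gives, for $0<\mu\le\lambda$, the recursion
\[
a_{n,m} \le \tfrac{\mu}{\lambda}\, a_{n-1,m-1} + \big(1-\tfrac{\mu}{\lambda}\big)\, a_{n,m-1},
\]
with boundary values controlled by $a_{n,0}\le n\lambda\|Ax\|$ and $a_{0,m}\le m\mu\|Ax\|$. Iterating this recursion by a double induction on $(n,m)$ — reading it as an average over a binomial random walk with an absorbing barrier at $n=0$ — produces the Crandall--Liggett estimate
\[
\|J_\lambda^n x - J_\mu^m x\| \le \Big( (n\lambda - m\mu)^2 + n\lambda^2 + m\mu^2 \Big)^{1/2}\|Ax\|
\]
(up to the precise form of the lower-order terms; the bound is symmetric under exchange of $(\lambda,n)$ with $(\mu,m)$, so assuming $\mu\le\lambda$ costs nothing). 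Given any admissible sequences $(\lambda_k,n_k)$ and $(\mu_k,m_k)$ with $n_k,m_k\to\infty$, $n_k\lambda_k\to t$ and $m_k\mu_k\to t$, one has $\lambda_k,\mu_k\to 0$ (since $n_k\lambda_k$ stays bounded while $n_k\to\infty$, similarly for $\mu_k$), hence $n_k\lambda_k - m_k\mu_k\to 0$, $n_k\lambda_k^2=(n_k\lambda_k)\lambda_k\to 0$ and $m_k\mu_k^2\to 0$, so the right-hand side tends to $0$. Consequently $(J_{\lambda_k}^{n_k}x)_k$ is Cauchy in $B$, and applying the estimate to two such sequences shows that its limit $M(t)x$ is independent of the chosen sequence; specializing to $\lambda=t/n$, $\mu=t/m$ and letting $m\to\infty$ gives $\|J_{t/n}^n x-M(t)x\|\le t\,n^{-1/2}\|Ax\|$, so the convergence is uniform for $t$ in bounded intervals.

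To remove the restriction $x\in D(A)$, recall that $f\mapsto J_\lambda^n f$ is a contraction, so the maps $f\mapsto J_{\lambda_k}^{n_k}f$ are $1$-Lipschitz uniformly in $k$; an $\varepsilon/3$ argument then shows that $M(t)f := \lim_k J_{\lambda_k}^{n_k}f$ exists for every $f\in\overline{D(A)}$, that the convergence is still locally uniform in $t$, and that $M(t)$ is again a contraction (this covers all $f\in B$ whenever $D(A)$ is dense, as it is in our application). Letting $m\to\infty$ in the Cauchy estimate gives $\|M(t)x - M(s)x\|\le|t-s|\,\|Ax\|$ for $x\in D(A)$, which together with the equicontinuity just used yields strong continuity of $t\mapsto M(t)f$ for every $f$; and the semigroup laws $M(0)=I$ and $M(t+s)=M(t)M(s)$ follow by comparing $J_{(t+s)/2n}^{2n}x$ with $J_{t/n}^n J_{s/n}^n x$ through the same type of estimate and passing to the limit. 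Hence $M(t)$ is the nonlinear contraction semigroup generated by $A$.

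The step I expect to require the most care is the iteration of the recursion for $a_{n,m}$: converting the two-term inequality into the closed square-root bound means organising the combinatorial weights carefully (an explicit binomial expansion, or a generating-function argument) and controlling the mean and variance of the underlying binomial law in the presence of the barrier at $n=0$. That is where essentially all the genuine work of the Crandall--Liggett argument is concentrated; the resolvent identity, the passage to $\overline{D(A)}$ by equicontinuity, and the verification of the continuity and semigroup properties are routine once the estimate is in hand.
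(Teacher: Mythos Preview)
Your proposal is a correct and reasonably detailed sketch of the classical Crandall--Liggett generation argument: the resolvent identity, the two-term recursion for $a_{n,m}=\|J_\lambda^n x-J_\mu^m x\|$, the square-root Cauchy estimate, and the extension to $\overline{D(A)}$ by equicontinuity are all standard and in the right order, and the caveat you flag about the combinatorics behind the closed-form bound is exactly where the work lies.

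There is, however, nothing to compare it against: the paper does not give a proof of this theorem at all. It is stated purely as background and attributed verbatim to Crandall and Liggett \cite[Theorem~I, p.~266]{crandall1971generation}; no argument is supplied or even sketched in the paper. So your proposal is not an alternative to the paper's proof but rather a reconstruction of the very reference the paper cites. If you intend to include a proof, what you have written is the right outline; if you only need the statement for the rest of the argument (as the authors do), a citation suffices.
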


From now on $\B$ will denote the space of periodic $\alpha$-H\"{o}lder continuous functions 
and  $(\B,|| \cdot ||)$ will be the Banach space obtained by endowing $\B$ with the sup norm $||\cdot||$. Then Schauder theory \cite{schauder1934lineare,schauder1934numerische} and Sobolev embedding theorems \cite{sobolev1963theorem,sobolev2008some} guarantee for a choice of $f\in L^p$ the existence and uniqueness of $W^{2,p}$ solutions to \eqref{eq:firstEquationAppear} as long as $<r,e_n>\neq 0$.

\begin{definition}\label{defn:viscositySolutions}
A continuous function $\gamma:\R^{n-1}\to\R$ is a weak sub-solution (resp. a super-solution) of
\begin{align}\label{intro-generator}
\gamma(x^{\prime})-\lambda \sum_{i,j=1}^{n-1}\bigg(\delta_{ij}-\frac{ \gamma_{x_i}(x^{\prime})\gamma_{x_j}(x^{\prime})}{1+|\nabla \gamma(x^{\prime})|^2}\bigg) \gamma_{x_ix_j}(x^{\prime})=f(x^{\prime}),
\end{align}
in $\R$ if for every $x^{\prime}\in \R^{n-1}$ and smooth $\phi:\R^{n-1}\to\R$ such that $\gamma-\phi$ has a maximum (resp. a minimum) at $x^{\prime}$
one must have $$\gamma(x^{\prime})-\lambda\sum_{i,j=1}^{n-1}\bigg(\delta_{ij}-\frac{\phi_{x_i}(x^{\prime})\phi_{x_j}(x^{\prime})}{1+|\nabla \phi(x^{\prime})|}\bigg) \phi_{x_ix_j}(x^{\prime})\le (resp. \ \ge ) \ f(x^{\prime}).$$ Solutions are  functions which are  simultaneously  super-solutions and sub-solutions.
\end{definition}

\begin{definition}\label{def-gen}
We say that $\gamma\in \B$ belongs to the domain of $A$ if there exists $f\in \B$ and $\lambda>0$ such that
$\gamma$ is a weak solution (in the sense of Theorem \ref{thm:FunGenThm}) of
\begin{align}\label{generator}
\gamma-\lambda \sum_{i,j=1}^{n-1} a_{ij}(\nabla \gamma)\gamma_{x_ix_j}=f, \text{ where }a_{ij}(\nabla \gamma):=\bigg(\delta_{ij}-\frac{ \gamma_{x_i}\gamma_{x_j}}{1+|\nabla \gamma|^2}\bigg)
\end{align}
in $\R^{n-1}$. In this case we will write 
\begin{equation}\label{eq:equationOPeratorial}
(I+\lambda A) \gamma =f.
\end{equation}
\end{definition}

\bigskip

Clearly $A$ is defined in a dense set of $\B$, and as being argued in \cite[Theorem 2.3, Theorem 2.5]{evans1993convergence}), it is possible to show that
$A$ is non dissipative and that its generated weak semigroup solution coincides with the viscosity solution to \eqref{eq:firstEquationAppear} (see \cite{biton2001nonlinear, evans1987nonlinear,crandall1984some,evans1993convergence,juutinen2001equivalence,ishii1990viscosity} for details).

\section{Properties of operator $H$}
\label{sec:propertiesOfTheOperatorH}
In this section we formally define the operator $H$ mentioned in the introduction, and we prove that it is contractive. 

We assume that the initial surface $\Sigma_0$ is the graph of some smooth function $\gamma$, and evolve the measure $u_0=<\nu,r>\delta_{\Sigma_0}$ for an interval of time $[0,T]$. Then the solution of \eqref{eq:firstEquationAppear} 
can be written as 
\begin{align}\label{eq:fundSolnExplicitExpression}
u(s,t)=\frac{1}{(4\pi t)^{^{n/2}}}\int\limits_{\Sigma_t}\sqrt{4\pi t}\;\expp^{-\lvert\x-s\rvert^2/4t}u_0(\x)d\sigma_{t},
\end{align}
where $d\sigma_t$ denotes the surface measure element on $\Sigma_t$.

If $f_0\in \B$, we send its graph $\Sigma_0$ to a new set defined via $\Sigma_t = \Big(s^{\prime},\big(H(t)f_0\big)(s^{\prime})\Big)= (s^{\prime},\gamma^t(s^{\prime}))$ with smooth function $\gamma^t$. In order to prove that $H$ induces a flow on $\B$, we need to show that $ \Sigma_t$ is the graph of a periodic, $\alpha$-H\"{o}lder continuous function $\gamma^t$:
\begin{lemma}\label{defflow}Let $u$ be the function defined as in \eqref{eq:fundSolnExplicitExpression} and $f_0\in \B$. Then for every $s^{\prime}\in \R^{n-1}$ and for every $t>0$ there exists a unique value $\gamma^t(s^{\prime})$ such that 
$$ u_{x_n}(s^{\prime}, \gamma^t(s'), t)=0\quad\text{and}\quad  u_{{x_n}{x_n}}(s^{\prime}, \gamma^t(s^{\prime}), t)<0.$$
As a consequence, for any $t>0$ there exists a function  $\gamma^t\in \B$ such that
$$\Sigma_t=(s^{\prime},(H(t)f_0)(s^{\prime}))= (s^{\prime},\gamma^t(s^{\prime})).$$
\end{lemma}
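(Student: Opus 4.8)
The plan is to turn formula \eqref{eq:fundSolnExplicitExpression} into an explicit one‑variable problem. Substituting the graph parametrisation $x=(x',f_0(x'))$: the surface $\Sigma_0$ carries the measure $d\sigma=\sqrt{1+|\nabla f_0|^2}\,dx'$ and the $e_n$‑component of its unit normal is $\langle\nu,e_n\rangle=(1+|\nabla f_0|^2)^{-1/2}$, so the geometric correction produces the cancellation $u_0\,d\sigma=\langle\nu,e_n\rangle\,d\sigma=dx'$ and \eqref{eq:fundSolnExplicitExpression} becomes
\[
u(s',s_n,t)=\frac{1}{(4\pi t)^{(n-1)/2}}\int_{\R^{n-1}}\expp^{-|x'-s'|^2/4t}\,\expp^{-(f_0(x')-s_n)^2/4t}\,dx'.
\]
For $t>0$ the right‑hand side is $C^\infty$ in $(s',s_n)$ (and entire in $s_n$), even though $f_0$ is only $\alpha$‑H\"older. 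Fix $s'$ and $t$ and set $g(s_n):=u(s',s_n,t)>0$. Since $f_0$ is periodic and continuous, hence bounded by some $C_0$, for $|s_n|>C_0$ one has $g(s_n)\le(4\pi t)^{-(n-1)/2}\expp^{-(|s_n|-C_0)^2/4t}\int_{\R^{n-1}}\expp^{-|x'-s'|^2/4t}\,dx'\to0$ as $|s_n|\to\infty$. Being positive and continuous, $g$ attains a positive maximum at some $\gamma^t(s')$, where $u_{x_n}=0$ and $u_{x_nx_n}\le0$; this settles existence.

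For the strict inequality $u_{x_nx_n}<0$ and for uniqueness — the core of the statement — I would differentiate under the integral. Writing $w(x')=(4\pi t)^{-(n-1)/2}\expp^{-|x'-s'|^2/4t}>0$, $G(x')=\expp^{-(f_0(x')-s_n)^2/4t}>0$ and $Z:=\int_{\R^{n-1}}wG\,dx'$ (so that $u=Z$), one gets $u_{x_n}=\frac{1}{2t}\int_{\R^{n-1}}wG\,(f_0-s_n)\,dx'$ and $u_{x_nx_n}=\frac{1}{2t}\big(\frac{1}{2t}\int_{\R^{n-1}}wG\,(f_0-s_n)^2\,dx'-Z\big)$, and these combine (via Cauchy–Schwarz) into the Wronskian‑type identity
\[
u\,u_{x_nx_n}-(u_{x_n})^2=\frac{Z^2}{4t^2}\big(\operatorname{Var}_\pi(f_0)-2t\big),
\]
valid for every $s_n$, where $\pi$ is the probability measure with density $\frac{1}{Z}wG$. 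At a critical point $s_n^0$ of $g$ we have $u_{x_n}=0$, so $s_n^0$ is the $\pi$‑mean of $f_0$ and $u_{x_nx_n}$ has the sign of $\operatorname{Var}_\pi(f_0)-2t$. I would then show that the damping in $G=\expp^{-(f_0-s_n^0)^2/4t}$ forces $\operatorname{Var}_\pi(f_0)<2t$ at every such critical point, so that $u_{x_nx_n}<0$ there and every critical point of $g$ is a strict local maximum; since $g$ is analytic, positive and vanishes at $\pm\infty$, it then has exactly one critical point (two strict maxima would force an intermediate critical point, a local minimum, contradicting that all critical points are strict maxima), and this unique point is $\gamma^t(s')$.

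Periodicity of $\gamma^t$ is immediate: if $p$ is a period of $f_0$, the substitution $x'\mapsto x'+p$ in the formula above gives $u(s'+p,s_n,t)=u(s',s_n,t)$, hence $\gamma^t(s'+p)=\gamma^t(s')$. For regularity, since $u\in C^\infty(\R^n\times(0,\infty))$ and $u_{x_nx_n}(s',\gamma^t(s'),t)<0\neq0$, the implicit function theorem applied to $u_{x_n}(\cdot,\cdot,t)=0$ shows $\gamma^t$ is locally $C^\infty$ near every $s'$, hence — by the uniqueness just obtained — globally $C^\infty$; being $C^1$ and periodic it has bounded gradient, so it is Lipschitz and a fortiori $\alpha$‑H\"older, i.e.\ $\gamma^t\in\B$. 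Therefore $\Sigma_t=(s',\gamma^t(s'))=(s',(H(t)f_0)(s'))$, which is the assertion.

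The genuinely delicate step is the variance bound $\operatorname{Var}_\pi(f_0)<2t$ at critical points: by the identity above the sign of $u\,u_{x_nx_n}-(u_{x_n})^2$ is controlled solely by whether $\operatorname{Var}_\pi(f_0)$ lies below or above $2t$, and this bound would fail for a general probability measure. Making it work must exploit the self‑consistency forced at a critical point — that $s_n$ equals the $\pi$‑mean of $f_0$ while $\pi$ itself carries the factor $\expp^{-(f_0-s_n)^2/4t}$ centred at that mean, so that excursions of $f_0$ away from $s_n$ are automatically suppressed in $\pi$ — rather than any soft compactness argument. This is where most of the effort will go, and it is more delicate than in Evans' setting precisely because here the initial datum lives only on the surface $\Sigma_0$.
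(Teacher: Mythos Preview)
Your route is genuinely different from the paper's. The paper does not introduce a variance identity; it computes $u_{x_nx_n}$ at $s=s_0+tv_{s_0}\nu_{s_0}$ by repeating verbatim the Taylor expansion and the changes of variables $y'=t^{-1/2}x'$, $z=Uy$ from Theorem~\ref{thm:thm1}, obtaining for small $t$
\[
u_{x_nx_n}(s,t)=-\frac{1}{1+|\nabla\gamma(0')|^2}\int_{\R^{n-1}}\expp^{-|z|^2/4}\,|U^{-1}|\,dz'+\text{(lower order)}<0.
\]
So the paper's argument is asymptotic as $t\to0$ and locates the strict maximum near $\Sigma_0$; it never attempts the global-in-$s_n$, all-$t$ picture you are after. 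Your existence step (positivity and decay of $g$) and the periodicity/regularity conclusions are cleaner than what the paper writes.

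The gap is exactly where you place it, but it is not merely ``delicate'': the bound $\operatorname{Var}_\pi(f_0)<2t$ at \emph{every} critical point of $g$ is false in general. Writing $g(s_n)=\int_{\R}\rho(y)\,\expp^{-(y-s_n)^2/4t}\,dy$ with $\rho$ the pushforward of $w\,dx'$ under $f_0$, one sees that $g$ is a Gaussian convolution of $\rho$, and such convolutions need not be unimodal. Concretely, take $f_0\in\B$ odd about $s'$ and equal to $\pm a$ on alternating half-periods except on short transition layers. By oddness, $s_n=0$ is a critical point of $g$. For $t$ in an intermediate range (large enough that the plateaux $f_0\approx\pm a$ carry most of the $wG$-mass, yet still $t<a^2/2$) one has $\operatorname{Var}_\pi(f_0)\approx a^2>2t$, so your own identity gives $u_{x_nx_n}(s',0,t)>0$: this critical point is a strict local \emph{minimum}, flanked by two strict maxima near $s_n=\pm a$. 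Hence ``every critical point is a strict maximum'' fails, and with it the uniqueness argument. The self-consistency heuristic does not rescue this symmetric bimodal case, because the damping $\expp^{-(f_0-0)^2/4t}=\expp^{-a^2/4t}$ is identical on both plateaux and drops out of $\pi$.

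For small $t$ the obstruction disappears: $\pi$ localises where $f_0$ is essentially linear, and then $\operatorname{Var}_\pi(f_0)\approx 2t\,|\nabla f_0(s')|^2/(1+|\nabla f_0(s')|^2)<2t$, which matches the paper's asymptotic value of $u_{x_nx_n}$. Since $H$ is only ever applied at times $t/j\to0$, that regime is all that matters; you should either restrict to small $t$ and make this localisation quantitative, or simply expand as in Theorem~\ref{thm:thm1}, which is what the paper does.
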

\begin{proof}
We want to compute the maximum of $u$ in the vertical direction $e_n$ via
\begin{align}
 u_{x_n}(\s,t)=0,
\end{align}
and show that
\begin{align}
u_{{x_n}{x_n}}(\s,t)<0,
\end{align}
where the gradient vanishes. Arguing as in Theorem \ref{thm:thm1} we find: 
\begin{align}
u_{{x_n}{x_n}} (\s,t)=-\frac{1}{(4\pi t)^{^{(n-1)/2}}}\int\limits_{\Sigma_t}\Big( 1 -\frac{(\gamma(x')-tv_{\s_0}\bnu_{n})^2}{2t}\Big)\expp^{-\lvert\x-tv_{\s_0}\bnu_{\s_0}\rvert^2/4t}u_0(\x)d\sigma_{\x}\\
=-\int\limits_{\R^{n-1}} \Big( 1 -\frac{\big(\gamma(t^{1/2}(U^{-1}z)')\big)^2}{2t}\Big)\expp^{-|z|^2/4}\abs{U^{-1}}\,dz'+O(\expp^{t^{3/2}})\\
=-\int\limits_{\R^{n-1}} \Big( 1 -\frac{\SUM_{i,j=1}^{n-1}\gamma_{x_i}(0^{\prime}) U^{-1}_{ih}z_h\gamma_{x_j}(0^{\prime}) U^{-1}_{jk}z_k}{2}\Big)\expp^{-|z|^2/4}\abs{U^{-1}}\,dz'+O(\expp^{t^{3/2}}).
\end{align} 
As in Theorem \ref{thm:thm1} we consider $h=k$.
Then we obtain: 
\begin{align}
\begin{split}
u_{{x_n}{x_n}}(\s,t)
=&-\int\limits_{\R^{n-1}} \expp^{-|z|^2/4}\abs{U^{-1}}\,dz' + \frac{\SUM_{i,j=1}^{n-1}\gamma_{x_i}(0^{\prime}) U^{-1}_{ih}\gamma_{x_j}(0^{\prime}) U^{-1}_{jh}}{2}\int\limits_{\R^{n-1}}z_h^2\expp^{-|z|^2/4}\abs{U^{-1}}\,dz'+O(\expp^{t^{3/2}}).
\end{split}
\end{align} 

Note that $$
\frac{1}{2}\int\limits_{\R^{n-1}}z_h^2\expp^{-|z|^2/4}dz'= \int\limits_{\R^{n-1}}\expp^{-|z|^2/4}dz',$$
and also $\SUM_{h=1}^{n-1} U^{-1}_{ih}U^{-1}_{jh} = g^{ij}$, resulting in
$$\SUM_{i,j=1}^{n-1}\gamma_{x_i}(0^{\prime}) U^{-1}_{ih}\gamma_{x_i}(0^{\prime}) U^{-1}_{jh}= \SUM_{i,j=1}^{n-1} \Big(\delta_{ij} 
- \frac{\gamma_{x_i}(0^{\prime}) \gamma_{x_j}(0^{\prime})}{1 + |\nabla\gamma(0^{\prime})|^2}\Big)\gamma_{x_i}(0^{\prime}) \gamma_{x_j}(0^{\prime})= \frac{|\nabla\gamma(0^{\prime})|^2}{1 + |\nabla\gamma(0^{\prime})|^2}.$$

We conclude that 
\begin{align}
u_{{x_n}{x_n}}(\s,t)
=&-\int\limits_{\R^{n-1}} \frac{1}{1 + |\nabla\gamma(0^{\prime})|^2}\expp^{-|z|^2/4}\abs{U^{-1}}\,dz'<0.
\end{align} 
\end{proof}

\begin{remark} For every fixed $s$ and $t>0$ we have proven that 
$u_{x_n}(s', \cdot , t)$ vanishes only at the point  $\gamma^t(s')$. 
Then note that
\begin{equation}
\text{$u_{x_n}(s', q, t)<0$ for $q>\gamma^t(s')$  and  $u_{x_n}(s', q, t)>0$ for $q<\gamma^t(s')$.}
\end{equation}
\end{remark}

\begin{theorem} \label{Hlip}  For each $t \geq 0$ the flow  $H(t): \B \to \B$ just defined has the following properties
 \begin{enumerate}
\item[(1)] If $C$ is a real constant, then $H(t)(\gamma+C) = H(t)\gamma +C$,
\item[(2)] If $\gamma  \leq \mu$ then $\gamma^t=H(t)\gamma  \leq H(t)\mu=\mu^t$,
\item[(3)] $H(t)$ is a contraction on $\B$, i.e.,
$$
||H(t)\gamma - H(t)\mu || \leq || \gamma-\mu||.
$$
 \end{enumerate}
\proof  
Assertion (1) follows from the definition. For assertions (2) and (3), consider the evolution problem given by
\begin{align}
\bcs
\partial_t=\Delta u\quad\text{in $\R^n\times (0,\infty)$}\\
u=<\nu, r> \delta_{\Sigma_0}\quad \text{at time $t=0$}.
\ecs 
\end{align} 
Let us call $u_\gamma$ and $u_\mu$ the solutions with initial datum defined by the graphs of $\gamma$ and $\mu$, respectively. Hence 
\begin{align}
u_\gamma(s,t)=&\frac{1}{(4\pi t)^{n/2}}\INT_{\Sigma_t^{\prime}} (x'-s', \gamma(x')-s_n) \expp^{-\frac{ |x'- s', \gamma(x')-s_n|^2}{4t}}dx',\\\
\end{align} 
and $u_\mu(s)$ has a similar expression in terms of $\mu$. 
Note that the function 
$$(\gamma-s_n) \expp^{-\frac{ |\gamma-s_n|^2}{t}}$$
decreases as a function of $\gamma$ for $t>0$ small. It follows that if $\gamma \leq \mu$ and $\gamma^t(s')$is the function defined in Lemma \ref{defflow}, then 
$$0 = u_\gamma(s', \gamma^t(s')) \geq u_\mu(s', \gamma^t(s')).$$
This implies that $\gamma^t=H(t)\gamma \leq H(t)\mu= \mu^t$ from which assertion (2) follows. We remark that comparison principle for intrinsic functions $\mu$ and $\gamma$ becomes valid as a direct consequence of assertion (2).

Assertion (3) follows from assertions (1) and (2). Let us choose $s^{\prime}$ such that
$$H(t)\gamma(s^{\prime}) - H(t)\mu(s^{\prime}) > ||H(t)\gamma(s^{\prime}) - H(t)\mu(s^{\prime})|| - \epsilon,$$
for each $\epsilon>0$ and call
$$H(t)\gamma(s^{\prime})=\alpha,\quad H(t)\mu(s^{\prime}) =\beta.$$
By assertion (2) we have
$$H(t)\gamma(s^{\prime}) - H(t)({\mu- \beta + \alpha - \epsilon})(s^{\prime})>0,$$
and it implies by assertion (1) that there exists a point $y$ such that
\begin{align}\label{eq:fromwhichAssertionThree}
\gamma(y^{\prime}) - (\mu(y^{\prime}) -  \beta + \alpha - \epsilon)>0.
\end{align}
Finally \eqref{eq:fromwhichAssertionThree} results in, by definitions of $\alpha$ and $\beta$, that  
$$ ||H(t)\gamma - H(t)\mu|| < H(t)\gamma(s^{\prime}) - H(t)\mu(s^{\prime}) + \epsilon < \gamma(y^{\prime}) - \mu(y^{\prime}) + 2\epsilon \leq ||\gamma - \mu|| + 2\epsilon, $$
from which assertion (3) follows.
\endproof
\end{theorem}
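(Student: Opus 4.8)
The plan is to prove the three assertions in the stated order, with (3) coming out as a formal consequence of (1) and (2), and to work throughout with $t>0$ small, which is all that Theorem~\ref{BrezisPazyteo} later requires. Since the representation formula \eqref{eq:fundSolnExplicitExpression} shows that $u$, hence (via the nondegeneracy $u_{x_nx_n}<0$ of Lemma~\ref{defflow} and the implicit function theorem) also $\gamma^t=H(t)f_0$, depends continuously on the datum in the sup norm, it suffices to establish the monotonicity (2) for smooth periodic $\gamma,\mu$ and then pass to general data in $\B$ by density.

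Assertion (1) is immediate from the translation invariance of the heat kernel: replacing $f_0$ by $f_0+C$ replaces $\Sigma_0$ by $\Sigma_0+Ce_n$ without changing the unit normal $\nu$, so the new initial measure is the push-forward of $\langle\nu,r\rangle\delta_{\Sigma_0}$ under $x\mapsto x+Ce_n$; by uniqueness the new caloric extension is $u(\,\cdot-Ce_n,t)$, whose locus $\{u_{x_n}=0,\ u_{x_nx_n}<0\}$ is the old one shifted by $Ce_n$. Hence $H(t)(f_0+C)=H(t)f_0+C$.

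Assertion (2) is the heart of the matter. With $r=e_n$ the weight $\langle\nu,e_n\rangle\,d\sigma_0$ in \eqref{eq:fundSolnExplicitExpression} collapses to $dx'$, so for fixed $s'$
\begin{align}
\partial_{s_n}u_\gamma(s',s_n,t)=\frac{c_n}{t^{(n+1)/2}}\INT_{\R^{n-1}}\big(\gamma(x')-s_n\big)\,\expp^{-|x'-s'|^2/4t}\,\expp^{-(\gamma(x')-s_n)^2/4t}\,dx',\qquad c_n>0,
\end{align}
and likewise with $\mu$ in place of $\gamma$. Set $\Phi_t(a):=(a-s_n)\,\expp^{-(a-s_n)^2/4t}$; then $\Phi_t$ is strictly increasing on $\{|a-s_n|<\sqrt{2t}\}$ and $|\Phi_t|\le\sqrt{2t}\,\expp^{-1/2}$ on all of $\R$. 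By Lemma~\ref{defflow} and its proof, $\gamma^t(s')=\gamma(s')+O(t)$, so for $s_n=\gamma^t(s')$ and $|x'-s'|\lesssim\sqrt t$ the arguments $\gamma(x'),\mu(x')$ fall in the interval of monotonicity of $\Phi_t$, where $\gamma\le\mu$ gives $\Phi_t(\gamma(x'))\le\Phi_t(\mu(x'))$; the complementary region $|x'-s'|\gtrsim\sqrt t$ contributes an amount $O(t^{n/2})$ by the uniform bound on $|\Phi_t|$ together with periodicity and the Lipschitz bound of the smooth datum. Replacing $\mu$ by $\mu+\delta$ (so that the main term, of size $\gtrsim\delta\,t^{(n-1)/2}$, dominates the tail for $t$ small) and letting $\delta\downarrow0$ afterwards, one concludes
\begin{align}
0=\partial_{s_n}u_\gamma(s',\gamma^t(s'),t)\le\partial_{s_n}u_\mu(s',\gamma^t(s'),t).
\end{align}
By the Remark following Lemma~\ref{defflow} this forces $\gamma^t(s')\le\mu^t(s')$, and since $s'$ is arbitrary, $H(t)\gamma\le H(t)\mu$; the general case in $\B$ follows by approximation.

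Assertion (3) is then a two-line deduction: for any $\gamma,\mu\in\B$ one has $\gamma\le\mu+\|\gamma-\mu\|$, so (2) and (1) give $H(t)\gamma\le H(t)\mu+\|\gamma-\mu\|$, and by symmetry also $H(t)\mu\le H(t)\gamma+\|\gamma-\mu\|$; hence $\|H(t)\gamma-H(t)\mu\|\le\|\gamma-\mu\|$. (Equivalently, one runs the $\varepsilon$-argument with the shifted comparison function $\mu-\beta+\alpha-\varepsilon$, as indicated in the statement.) The main obstacle is assertion (2): the pointwise comparison of the vertical derivatives is only transparent after the Gaussian integral has been localised to a neighbourhood of $s'$ on which $\Phi_t$ is monotone, and the delicate point is to check — uniformly in $s'$, using periodicity, the Lipschitz bound, and the expansion $\gamma^t(s')=\gamma(s')+O(t)$ — that the discarded tails are of strictly lower order than the leading term; this is also precisely where the restriction to smooth data and to small $t$ enters.
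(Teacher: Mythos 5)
Your overall strategy coincides with the paper's: you prove (1) by vertical translation invariance of the heat kernel, (2) by comparing the vertical derivatives $\partial_{s_n}u_\gamma$ and $\partial_{s_n}u_\mu$ at the level $s_n=\gamma^t(s')$ through the monotonicity of $\Phi_t(a)=(a-s_n)\expp^{-(a-s_n)^2/4t}$ combined with the sign characterization in the Remark after Lemma \ref{defflow}, and (3) as a formal consequence of (1)--(2) (your deduction via $\gamma\le\mu+\|\gamma-\mu\|$ is the clean equivalent of the paper's $\epsilon$-argument). You are in fact more careful than the paper on one point: the paper asserts that $a\mapsto(a-s_n)\expp^{-(a-s_n)^2/4t}$ ``decreases'' globally, which is false for every $t>0$; you correctly observe that it is increasing only on the window $|a-s_n|<\sqrt{2t}$ and therefore localise the Gaussian integral.

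The localisation as you state it, however, has a genuine gap: on the near region $|x'-s'|\lesssim\sqrt t$ you claim that \emph{both} $\gamma(x')$ and $\mu(x')$ fall in the monotonicity window of $\Phi_t$ around $s_n=\gamma^t(s')$. This holds for $\gamma(x')$ (Lipschitz bound plus $\gamma^t(s')=\gamma(s')+O(t)$), but not for $\mu(x')$: nothing prevents $\mu(s')-\gamma(s')$ from being much larger than $\sqrt t$, in which case $\mu(x')>s_n+\sqrt{2t}$ lies where $\Phi_t$ is decreasing, and the pointwise inequality $\Phi_t(\gamma(x'))\le\Phi_t(\mu(x'))$ can fail (take $\gamma(x')=s_n+\sqrt{2t}$ and $\mu(x')=s_n+100\sqrt t$). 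The $\delta$-shift you introduce does not repair this; for fixed $\delta$ and $t\to0$ it pushes $\mu+\delta$ even further outside the window. The fix is a dichotomy using the expansion you already invoke: if $\mu(s')-\gamma(s')\ge\sqrt t$, then $\mu^t(s')-\gamma^t(s')=\mu(s')-\gamma(s')+O(t)>0$ directly, with no integral comparison needed; if $\mu(s')-\gamma(s')<\sqrt t$, then after shrinking the constant defining the near region according to the common Lipschitz bound, both arguments do stay in the window and your monotonicity-plus-tail argument goes through. With that case split inserted (together with the smoothness/density reduction you set up at the start), your proof of (2), and hence of the theorem, is complete; for what it is worth, the paper's own proof of (2) is no more detailed than yours and suffers from the same difficulty in a cruder form.
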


\section{Local properties of the evolution}
\label{sec:secFixedDirection}

In this section we prove that under the action of the proposed algorithm, each point of initial surface $\Sigma_0$ moves in the normal direction with speed equal to \begin{align}
\frac{<\rr, e_n>}{ <\nu_{s_0}, e_n><\nu_{s_0}, \rr>} K+O(t^{1/2}).
\end{align}

For all $t > 0 $ denote $u(x, t)$  the solution of the Cauchy problem
$$
\bcs
\partial_t u=\Delta u\spc\text{in $\R^n\times (0,\infty)$}\\
u(.,0)=u_0(.)=<\nu,r>\delta_{\Sigma_0}(.)\quad\text{on $\Sigma_0 \times \{t=0\}$},
\ecs
$$

With use of the same notations from the previous section, 
we denote the unit normal to the surface with $\nu_{s_0}$ at $s_0\in\Sigma_0$ and 
select $v_{s_0}$ such that 
\begin{align}
s = s_0 + t v_{s_0}\nu_{s_0} \in \Sigma_t.
\end{align}
Then the following result holds:
\begin{theorem}\label{thm:thm1}
\begin{align}
v_{\s_0}= \frac{<\rr, e_n>}{ <\nu_{s_0}, e_n><\nu_{s_0}, \rr>} K+O(t^{1/2})\spc \text{as $t\rightarrow 0$},
\end{align}
where $K$ is the mean curvature (computed with respect to $\bnu_{\s_0}$) at $\s_0\in \Sigma_0$.
\end{theorem}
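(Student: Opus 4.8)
The strategy is an asymptotic expansion of the heat evolution of the singular measure $u_0 = \langle \nu, r\rangle\,\delta_{\Sigma_0}$ as $t\to 0^+$, localized near a chosen point $s_0 \in \Sigma_0$. First I would fix $s_0$, translate so that $s_0 = 0$, and choose coordinates so that $\Sigma_0$ is locally the graph $x_n = \gamma(x')$ with $\gamma(0') = 0$; the full expression $u(s,t) = (4\pi t)^{-n/2}\int_{\Sigma_t}\sqrt{4\pi t}\,e^{-|x-s|^2/4t}u_0(x)\,d\sigma_t$ from \eqref{eq:fundSolnExplicitExpression} is the starting point. The point $s = s_0 + t v_{s_0}\nu_{s_0}$ is characterized, by Lemma \ref{defflow} and its following remark, as the unique location on the line $s_0 + \R\,e_n$ where $u_{x_n}(\cdot, t) = 0$ — so the task reduces to computing $u_{x_n}$ near $s$ to leading order in $t$, setting it to zero, and reading off $v_{s_0}$.

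The key computational step is a Laplace-type (Gaussian) rescaling: substitute $x = s_0 + t^{1/2} y'$ along the surface (using the graph parametrization $x' \mapsto (x', \gamma(x'))$ so that $d\sigma_t = \sqrt{1+|\nabla\gamma|^2}\,dx'$), and Taylor-expand $\gamma$ around $0'$ to second order, $\gamma(t^{1/2}y') = t^{1/2}\nabla\gamma(0')\cdot y' + \tfrac{t}{2}\,y'^{\top} D^2\gamma(0')\,y' + O(t^{3/2})$. I would also expand $s = s_0 + t v_{s_0}\nu_{s_0}$, keeping $v_{s_0}$ as the unknown of order $O(1)$ (so the displacement is $O(t)$), and expand the exponent $-|x-s|^2/4t$ accordingly; the factor $\langle\nu,r\rangle$ in $u_0$ contributes $\langle\nu_{s_0},r\rangle + O(t^{1/2})$. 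Differentiating in $x_n$ brings down a factor proportional to $(x_n - s_n)/2t = (\gamma(x') - s_n)/2t$, which after the rescaling is $O(t^{-1/2})$ times a linear-plus-quadratic polynomial in $y'$. Matching the coefficient of the leading nonzero power of $t$ to zero, after performing the resulting Gaussian integrals over $\R^{n-1}$ (odd moments vanish, $\int z_h^2 e^{-|z|^2/4}\,dz' = 2\int e^{-|z|^2/4}\,dz'$ as already used in Lemma \ref{defflow}, and the change-of-variables factor $|U^{-1}|$ with $\sum_h U^{-1}_{ih}U^{-1}_{jh} = g^{ij}$), produces the identity $\sum_{i,j}\big(\delta_{ij} - \gamma_{x_i}\gamma_{x_j}/(1+|\nabla\gamma|^2)\big)\gamma_{x_ix_j} = K\,(1+|\nabla\gamma|^2)$, i.e.\ exactly the curvature operator. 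The scalar prefactors $\langle r, e_n\rangle$, $\langle\nu_{s_0}, e_n\rangle$, $\langle\nu_{s_0}, r\rangle$ emerge from: the chain rule relating $\partial_{x_n}$ of the level-set condition to the normal displacement (the $\langle\nu_{s_0}, e_n\rangle$ in the denominator), the dependence of the density $\langle\nu,r\rangle$ and of the normalization of the surface measure on the choice of $r$ (producing $\langle\nu_{s_0},r\rangle$), and the numerator $\langle r, e_n\rangle$ from how the $e_n$-derivative sees the $r$-direction; tracking these carefully is the bookkeeping core of the argument.

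The main obstacle I anticipate is controlling the error terms uniformly — specifically, justifying that the contributions from $x$ far from $s_0$ on $\Sigma_t$ are exponentially small ($O(e^{-c/t})$, written loosely as $O(e^{t^{3/2}})$-type remainders in the Lemma \ref{defflow} computation) and that the Taylor remainders in $\gamma$ genuinely contribute only $O(t^{1/2})$ after integration against the Gaussian, rather than something larger hidden in the $t^{-1/2}$ prefactor from the differentiation. This requires: (i) a cutoff argument splitting $\Sigma_t$ into a $t^{1/2-\epsilon}$-neighborhood of $s_0$ and its complement, with the complement estimated by the Gaussian tail; (ii) on the near piece, dominated-convergence-type control of the rescaled integrand, using smoothness of $\gamma$ and its boundedness (periodicity of $f_0$ helps here) to bound third derivatives; and (iii) checking that the implicit equation $u_{x_n}(s_0 + t v\,\nu_{s_0}, t) = 0$ has, by the sign information in the Remark after Lemma \ref{defflow} (namely $u_{x_n} > 0$ below and $< 0$ above $\gamma^t$) together with the strict negativity $u_{x_nx_n} < 0$ established there, a unique solution $v = v_{s_0}(t)$ depending continuously on $t$, so that the expansion obtained by formal matching is actually the asymptotics of the true root. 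Once these estimates are in place, equating the $O(1)$ terms yields $v_{s_0} = \dfrac{\langle r, e_n\rangle}{\langle\nu_{s_0}, e_n\rangle\,\langle\nu_{s_0}, r\rangle}\,K + O(t^{1/2})$, which is the assertion.
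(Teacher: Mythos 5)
Your overall strategy --- localize at $s_0$, rescale by $t^{1/2}$, Taylor-expand $\gamma$ to second order, diagonalize the induced metric $g_{ij}=\delta_{ij}+\gamma_{x_i}\gamma_{x_j}$ via $z=Uy$, and use parity plus the Gaussian second moments so that only the Hessian term and the $tv_{s_0}$ term survive --- is exactly the paper's computation, and your attention to the tail/cutoff estimates and to the solvability of the implicit equation for $v_{s_0}(t)$ is in fact more careful than what the paper writes down. However, two concrete steps in your plan would fail as stated.

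First, for a general unit vector $r$ the surface $\Sigma_t$ is the zero set of $\nabla_r u=\langle\nabla u,r\rangle$ (see \eqref{eq:surfaceDefn2} and \eqref{eq:surfaceDefnModifiedFixed}), not of $u_{x_n}$; the characterization via $u_{x_n}=0$ in Lemma \ref{defflow} and its remark is the special case $r=e_n$. Moreover $v_{s_0}$ is defined by the normal displacement $s=s_0+tv_{s_0}\nu_{s_0}$, so the equation to expand is $\langle\nabla u(s_0+tv_{s_0}\nu_{s_0},t),r\rangle=0$, whose integrand carries the factor $\langle(x',\gamma(x'))-tv_{s_0}\nu_{s_0},\,r\rangle$. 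After rescaling, the tangential part $t^{1/2}\langle y',r'\rangle$ is odd and integrates to zero, leaving $\gamma(t^{1/2}y')\,r_n-tv_{s_0}\langle\nu_{s_0},r\rangle$: this is where both $\langle r,e_n\rangle$ and $\langle\nu_{s_0},r\rangle$ actually come from, not from a chain rule applied to $\partial_{x_n}$ of the level-set condition. Second, your curvature identity is off by a square root:
\[
\sum_{i,j=1}^{n-1}\Big(\delta_{ij}-\frac{\gamma_{x_i}\gamma_{x_j}}{1+|\nabla\gamma|^2}\Big)\gamma_{x_ix_j}
=\sqrt{1+|\nabla\gamma|^2}\;K=\frac{K}{\langle\nu_{s_0},e_n\rangle},
\]
not $(1+|\nabla\gamma|^2)K$. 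Carrying your version through would give $v_{s_0}=\langle r,e_n\rangle K/\big(\langle\nu_{s_0},e_n\rangle^2\langle\nu_{s_0},r\rangle\big)+O(t^{1/2})$, which is not the statement; with the corrected identity the balance $r_n\,K/\langle\nu_{s_0},e_n\rangle=v_{s_0}\langle\nu_{s_0},r\rangle$ yields exactly the claimed constant.
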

\begin{proof}
We may assume $\s_0=0=(0,\dots,0)\in\Sigma_0\subset \R^n$ without losing the generality. 
Then at fixed time $t>0$ we have
\begin{align}\label{eq:surfaceDefnModifiedFixed}
\nabla_{r} u(\s,t)=0.
\end{align}
More precisely,
\begin{align}
0=&<\nabla u(\s,t),r>=\frac{1}{(4\pi t)^{^{(n-1)/2}}}\int\limits_{\Sigma_0}<\x-tv_{\s_0}\bnu_{\s_0},r>\expp^{-\lvert\x-tv_{\s_0}\bnu_{\s_0}\rvert^2/4t}u_0(\x)d\sigma_t.
\end{align}

Consider $\Sigma_0$ as the graph $(x^{\prime},\gamma(x^{\prime}))$ of smooth function $\gamma:\R^{n-1}\rightarrow \R$ and write
\begin{align}
0=&\INT_{\Sigma_t^{\prime}} <\big((x', \gamma(x'))-tv_{\s_0}\bnu_{\s_0}\big),r>\expp^{-\frac{\lvert((x', \gamma(x'))-tv_{\s_0}\bnu_{\s_0}\rvert^2}{4t}}u_0((x', \gamma(x')))\sqrt{1 + |\nabla \gamma(x')|^2}\,dx'.
\end{align} 

Now substitute $\yp=t^{-1/2}\xp$ and note that 
$$u_0\Big(\typ,\gyp\Big)=u_0(0)+O(t^{1/2}\lvert\yp\rvert),
$$
$$
|\nabla \gamma(\typ)|^2= |\nabla \gamma(0^{\prime})|^2+O(t^{1/2}\lvert\yp\rvert).
$$
Then we obtain
\begin{align}\label{eq:integralInnn}
0=&\INT_{t^{-1/2}\Sigma_t^{\prime}}<\big((t^{1/2}y', \gamma(t^{1/2}y'))-tv_{\s_0}\bnu_{\s_0}\big),r>\expp^{-\frac{\lvert((t^{1/2}y', \gamma(t^{1/2}y'))-tv_{\s_0}\bnu_{\s_0}\rvert^2}{4t}}d\yp+O(\expp^{-\alpha/2t}).
\end{align} 

Taylor development gives
\begin{align}
&\gyp=\sum\limits_{i=1}^{n-1}\gamma_{x_i}(\op)t^{1/2}y_i+O(t^{1/2}\lvert\yp \rvert^3),
\end{align}
hence the argument of the exponential in \eqref{eq:integralInnn} becomes
$$\frac{\lvert((t^{1/2}y', \gamma(t^{1/2}y'))-tv_{\s_0}\bnu_{\s_0}\rvert^2}{4t}
=\frac{\lvert (t^{1/2}y', \gamma(t^{1/2}y'))\rvert^2}{4t} + O(t)$$
$$=\frac{|y'|^2 +  |\gamma(t^{1/2}y'))|^2}{t}+ O(t)=(\delta_{ij} + \gamma_{x_i}(0^{\prime})\gamma_{x_j}(0^{\prime}))y_iy_j + O(t).$$

Since the matrix consisting of $$g_{ij}= \delta_{ij} + \gamma_{x_i}(0^{\prime})\gamma_{x_j}(0^{\prime}),$$ 
is positive definite there exists a matrix $U$ such that $g_{ij} = (U^T)_{ih}U_{hj}$, where the left and right sub-indices denote row and column positions, respectively, in corresponding matrices. 
Then with the change of variable  $z=Uy$ we can write \eqref{eq:integralInnn} as
\begin{align}
0&=\int\limits_{\R^{n-1}}<\big((t^{1/2}(U^{-1}z)', \gamma(t^{1/2}(U^{-1}z)'))-tv_{\s_0}\bnu_{\s_0}\big), r>\expp^{-|z|^2/4}\abs{U^{-1}}\,dz^{\prime}+O(\expp^{-\alpha/2t})\\
&=\int\limits_{\R^{n-1}}\big(\gamma(t^{1/2}(U^{-1}z)')r_n -tv_{\s_0}<\bnu_{\s_0},r>\big)\expp^{-|z|^2/4}\abs{U^{-1}}\,dz^{\prime}+O(\expp^{-\alpha/2t}),
\end{align}
as $t\rightarrow 0$. Further expanding function $\gamma$, we obtain
\begin{align}\label{eq:intToBeComputed}
0&=\int\limits_{\R^{n-1}}\Big(r_n\sum\limits_{i=1}^{n-1}\gamma_{x_i}(\op)t^{1/2}U^{-1}_{ih}z_h+\frac{r_n}{2}\sum\limits_{i,j=1}^{n-1}\gamma_{x_ix_j}(\op)tU^{-1}_{ih}z_hU^{-1}_{jk}z_k-tv_{\s_0}<\bnu_{\s_0},r>\Big)\\
&\expp^{-|z|^2/4}\abs{U^{-1}}\,dz^{\prime}+O(t^{3/2}).
\end{align}

The first order term in \eqref{eq:intToBeComputed} vanishes due to the Euclidean symmetry. 
The second order term with $h\not=k$ also vanishes. Hence we are left with
\begin{align}
0&=\int\limits_{\R^{n-1}}\Big(\frac{r_n}{2}\sum\limits_{i,j=1}^{n-1}\gamma_{x_ix_j}(\op)tU^{-1}_{ih}U^{-1}_{jh} |z_h|^2-tv_{\s_0}<\bnu_{\s_0},r>\Big)\expp^{-|z|^2/4}\abs{U^{-1}}\,dz^{\prime}+O(t^{3/2})=\\
&=\int\limits_{\R^{n-1}}\Big(r_n\sum\limits_{i,j=1}^{n-1}\gamma_{x_ix_j}(\op)tU^{-1}_{ih}U^{-1}_{jh} -tv_{\s_0}<\bnu_{\s_0},r>\Big)\expp^{-|z|^2/4}\abs{U^{-1}}\;dz^{\prime}+O(t^{3/2}).
\end{align}

Since $$\sum\limits_{i,j=1}^{n-1}\gamma_{x_ix_j}(\op)U^{-1}_{ih}U^{-1}_{jh} =\SUM_{i,j=1}^{n-1}\Big(
\delta_{ij} - \frac{\gamma_{x_i}(\op) \gamma_{x_j}(\op)}{1 + |\nabla \gamma(\op)|^2}\Big)\gamma_{x_ix_j},$$
the integral in \eqref{eq:intToBeComputed} boils down to
\begin{align}
0=r_n \Big(\Delta\gamma(\op)-\frac{\sum\limits_{i,j=1}^{n-1}\gamma_{x_i}(\op)\gamma_{x_j}(\op)\gamma_{x_ix_j}(\op)}{1+\lvert\nabla\gamma(\op)\rvert^2}\Big)-v_{\s_0}<\bnu_{\s_0},r>+O(t^{1/2}),
\end{align}
which implies
\begin{align}\label{eq:resultAlgorithmSc}
v_{\s_0}=\frac{<r,e_n>(1+\lvert\nabla\gamma(\op)\rvert^2)^{1/2}}{<r,\bnu_{\s_0}>}\;\text{div}\Bigg(\frac{\nabla\gamma(\op)}{\sqrt{1+\lvert\nabla\gamma(\op)\rvert^2}} \Bigg)+O(t^{1/2}),
\end{align}
as $t\rightarrow 0$.
\end{proof}

\begin{corollary}\label{cor:corollary1} 
If $r=e_n$ then 
\begin{align}\label{cccc}
v_{\s_0}=(1+\abs{\nabla\gamma(s_0^{\prime})}^2)\;\text{div}\Bigg(\frac{\nabla\gamma(s_0^{\prime})}{\sqrt{1+\lvert\nabla\gamma(s_0^{\prime})\rvert^2}} \Bigg)+O(t^{1/2}),\quad \text{as $t\rightarrow 0$,}
\end{align}
for all $s_0\in \Sigma_0$.
\end{corollary}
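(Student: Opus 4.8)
The plan is to specialize the conclusion of Theorem \ref{thm:thm1} — equivalently equation \eqref{eq:resultAlgorithmSc} — to the choice $r=e_n$ and to simplify the resulting geometric prefactor. No new analysis is required: the corollary is a purely algebraic consequence of the theorem, so the proof reduces to a short computation with the unit normal of a graph, carried out at an arbitrary base point $s_0\in\Sigma_0$.

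First I would recall that, writing $\Sigma_0$ as the graph $(x^{\prime},\gamma(x^{\prime}))$, the unit normal at the point $s_0=(s_0^{\prime},\gamma(s_0^{\prime}))$ (oriented consistently with the mean curvature $K$ appearing in Theorem \ref{thm:thm1}) is
$$\boldsymbol{\nu}_{s_0}=\frac{(-\nabla\gamma(s_0^{\prime}),\,1)}{\sqrt{1+\lvert\nabla\gamma(s_0^{\prime})\rvert^2}},$$
so that $\langle\boldsymbol{\nu}_{s_0},e_n\rangle=(1+\lvert\nabla\gamma(s_0^{\prime})\rvert^2)^{-1/2}$. With $r=e_n$ one has $\langle r,e_n\rangle=1$ and $\langle r,\boldsymbol{\nu}_{s_0}\rangle=\langle\boldsymbol{\nu}_{s_0},e_n\rangle=(1+\lvert\nabla\gamma(s_0^{\prime})\rvert^2)^{-1/2}\neq 0$, so every expression below is well defined.

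Next I would substitute these values into \eqref{eq:resultAlgorithmSc}. The prefactor becomes
$$\frac{\langle r,e_n\rangle\,(1+\lvert\nabla\gamma(s_0^{\prime})\rvert^2)^{1/2}}{\langle r,\boldsymbol{\nu}_{s_0}\rangle}=(1+\lvert\nabla\gamma(s_0^{\prime})\rvert^2)^{1/2}\cdot(1+\lvert\nabla\gamma(s_0^{\prime})\rvert^2)^{1/2}=1+\lvert\nabla\gamma(s_0^{\prime})\rvert^2,$$
while the divergence term and the error term $O(t^{1/2})$ carry over unchanged, which is exactly \eqref{cccc}. As a consistency check one may instead start from the first form of the statement of Theorem \ref{thm:thm1}: with $r=e_n$ the coefficient $\langle r,e_n\rangle/(\langle\boldsymbol{\nu}_{s_0},e_n\rangle\langle\boldsymbol{\nu}_{s_0},r\rangle)$ equals $\langle\boldsymbol{\nu}_{s_0},e_n\rangle^{-2}=1+\lvert\nabla\gamma(s_0^{\prime})\rvert^2$, and $K=\text{div}\big(\nabla\gamma(s_0^{\prime})/\sqrt{1+\lvert\nabla\gamma(s_0^{\prime})\rvert^2}\big)$, giving the same conclusion. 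Since in the proof of Theorem \ref{thm:thm1} the point $s_0$ was reduced to the origin only for notational convenience, the identity holds for every $s_0\in\Sigma_0$.

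The only point requiring care — and the closest thing to an obstacle here — is orientation bookkeeping: one must use the same unit normal $\boldsymbol{\nu}_{s_0}$ as the one with respect to which $K$, and hence the signed divergence $\text{div}\big(\nabla\gamma/\sqrt{1+\lvert\nabla\gamma\rvert^2}\big)$, was computed in Theorem \ref{thm:thm1}, and one must have $\langle r,\boldsymbol{\nu}_{s_0}\rangle\neq 0$ for the formula to make sense; for a graph with $r=e_n$ the latter is automatic because $\langle\boldsymbol{\nu}_{s_0},e_n\rangle>0$, and the sign of the prefactor $1+\lvert\nabla\gamma\rvert^2>0$ is manifestly consistent. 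This completes the argument.
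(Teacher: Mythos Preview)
Your proof is correct and follows exactly the approach the paper intends: the paper's own proof reads in its entirety ``Straightforward computation from Theorem \ref{thm:thm1},'' and you have simply carried out that computation explicitly by inserting $r=e_n$ into \eqref{eq:resultAlgorithmSc} and using the standard expression for the unit normal of a graph. Your additional remarks on orientation and the nonvanishing of $\langle r,\boldsymbol{\nu}_{s_0}\rangle$ are sound and make the argument more complete than the paper's one-line justification.
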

\begin{proof}
Straightforward computation from Theorem \ref{thm:thm1}.
\end{proof}

\begin{corollary} 
If $r=\nu_{s_0}$ then 
\begin{align}\label{eq:corollaryCoinciding2}
v_{\s_0}=\text{div}\Bigg(\frac{\nabla\gamma(s_0^{\prime})}{\sqrt{1+\lvert\nabla\gamma(s_0^{\prime})\rvert^2}} \Bigg)+O(t^{1/2}),\quad \text{as $t\rightarrow 0$,}
\end{align}
for all $s_0\in\Sigma_0$.
\end{corollary}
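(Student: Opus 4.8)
The plan is to read off \eqref{eq:corollaryCoinciding2} as an immediate specialization of Theorem \ref{thm:thm1} to $r=\bnu_{\s_0}$. Recall that theorem gives
$$v_{\s_0}=\frac{<\rr,e_n>}{<\nu_{\s_0},e_n><\nu_{\s_0},\rr>}\,K+O(t^{1/2}),$$
where $K$ is, by the convention fixed in the introduction, the mean curvature of $\Sigma_0$ at $\s_0$ written as $\text{div}\big(\nabla\gamma/\sqrt{1+|\nabla\gamma|^2}\big)$.

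First I would substitute $r=\bnu_{\s_0}$. Since $\bnu_{\s_0}$ is a unit vector, $<\nu_{\s_0},\rr>=<\nu_{\s_0},\bnu_{\s_0}>=1$, so the denominator reduces to $<\nu_{\s_0},e_n>$, which cancels against the numerator $<\rr,e_n>=<\bnu_{\s_0},e_n>$; the cancellation is legitimate because $<\bnu_{\s_0},e_n>\neq 0$, precisely since $\Sigma_0$ is a graph over the hyperplane $x_n=0$. This leaves $v_{\s_0}=K+O(t^{1/2})$, which is \eqref{eq:corollaryCoinciding2}. Alternatively, one can start from the explicit form \eqref{eq:resultAlgorithmSc} and use $<\bnu_{\s_0},e_n>=(1+|\nabla\gamma(\op)|^2)^{-1/2}$ (the upward unit normal to the graph) together with $<\bnu_{\s_0},\bnu_{\s_0}>=1$ to see the prefactor $<\rr,e_n>(1+|\nabla\gamma(\op)|^2)^{1/2}/<\rr,\bnu_{\s_0}>$ collapse to $1$. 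Finally, the translation normalization $\s_0=0$ employed in the proof of Theorem \ref{thm:thm1} is harmless, so the identity holds at every $\s_0\in\Sigma_0$.

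There is essentially no obstacle: the argument is a one-line algebraic substitution into an already proved formula. The only details worth stating explicitly are the orientation convention for $\bnu_{\s_0}$ (one fixes the upward normal, so that $<\bnu_{\s_0},e_n>$ has the same sign as in the expression of $K$ used throughout) and the observation that the $O(t^{1/2})$ remainder of Theorem \ref{thm:thm1} is unaffected by fixing the direction $r$. This is why the statement is just labelled a corollary.
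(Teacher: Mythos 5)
Your substitution is exactly the ``straightforward computation from Theorem \ref{thm:thm1}'' that the paper invokes: with $r=\bnu_{\s_0}$ the prefactor $\tfrac{<\rr,e_n>}{<\nu_{\s_0},e_n><\nu_{\s_0},\rr>}$ (equivalently, the prefactor in \eqref{eq:resultAlgorithmSc}) collapses to $1$, leaving $v_{\s_0}=K+O(t^{1/2})$. This matches the paper's proof, and your added remarks on the orientation of the normal and the nonvanishing of $<\bnu_{\s_0},e_n>$ are correct.
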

\begin{proof}
Straightforward computation from Theorem \ref{thm:thm1}.
\end{proof}

\section{Main result}
\label{sec:mainResult}
We provide here the proof of Theorem \ref{maintheorem}, which follows in the same way as in the proof of \cite[Theorem 2]{capogna2013sub} with the only change of the corresponding evolution equation. 
\begin{proof}[Proof of Theorem \ref{maintheorem}]
We only have to show that \eqref{hyp} holds for $\lambda=1$. For this purpose, by following \cite[Theorem 5.1]{evans1993convergence} and \cite[Theorem 2]{capogna2013sub}, we define for $t>0$ and $f\in B$ that
\begin{align}
\gamma_t:=\big(I+t^{-1}(I-H(t))\big)^{-1}f\quad\text{and}\quad A_t \gamma:=\frac{\gamma-H(t)\gamma}{t}.
\end{align}
Due to \cite[Theorem 2.3]{evans1993convergence} $-A$ is non dissipative and thus so $-A_t$ is, implying for all $x,y\in\Sigma_t$ and $t> 0$ that
\begin{align}\label{eq:towardsBoundednessEquicont}
\sup_{x\in \Sigma_t}\abss{\gamma_{t}(yx)-\gamma_t(x)}\leq \sup_{x\in \Sigma_t}\abss{f(yx)-f(x)}.
\end{align}
Note that \eqref{eq:towardsBoundednessEquicont} results in a bounded and equicontinuous family $\{\gamma_t \}_{t\in (0,1]}$. Therefore Arzel\`{a}-Ascoli theorem \cite{arzela1895sulle, ascoli1884curve} is valid.

Let $\phi\in C^{\infty}(\Sigma_t^{\prime})$ such that $\gamma_t-\phi$ has a positive maximum at $x_0^{\prime}$. We can always assume that the maximum is strict, adding a suitable power of the gauge distance if it is needed, as for example in \cite{capogna2009generalized}. Since $\gamma_{t_k}\rightarrow \gamma$ uniformly on compact sets then one can find a sequence of points $x_k^{\prime}\rightarrow x_0^{\prime}$ as $k\rightarrow \infty$ such that $\gamma_{t_k}-\phi$ has a positive maximum at $x_k^{\prime}$ and 
\begin{align}
(H(t_k)\gamma_{t_k})(x_k^{\prime})-(H(t_k)\phi)(x_k^{\prime})\leq \gamma_{t_k}(x_k^{\prime})-\phi(x_k^{\prime}),\spc\text{i.e.,}\spc A_{t_k}\phi(x_k^{\prime})\leq A_{t_k}\gamma_{t_k}(x_k^{\prime}).
\end{align}
Since $\gamma_t+A_t\gamma_t=f$ then
\begin{align}\label{eq:towardsTheEnddd}
\gamma_{t_k}(x_k^{\prime})+\frac{\phi(x_k^{\prime})-(H(t_k)\phi)(x_k^{\prime})}{t_k}\leq f(x_k^{\prime}).
\end{align}
\end{proof}
We find from \eqref{eq:towardsTheEnddd} via Corollary \ref{cor:corollary1} (if we replace $f$ with $\phi$) that
\begin{align}\displaystyle
\gamma_{t_k}(x_k^{\prime})-\sqrt{1+\abs{\nabla\gamma(x_k^{\prime})}^2}\SUM_{i,j=1}^{n-1}\Big(\delta_{ij}-\frac{\phi_{x_i}(x_k^{\prime})\phi_{x_j}(x_k^{\prime})}{1+\abs{\nabla \phi(x_k^{\prime})}^2}\Big)\phi_{x_ix_j}(x_k^{\prime})+o(1)\leq f(x_k^{\prime}).
\end{align} 
Letting $k\rightarrow \infty$ we establish that $\gamma$ is a weak sub-solution of \eqref{eq:equationOPeratorial} with $\lambda=1$. Through the same reasoning one can prove that $\gamma$ is also a sup-solution and it completes the proof.

\bibliographystyle{chicagoa}
\bibliography{baspinar2016_A_diffusion}

\end{document}